\newif\ifTwoColumn
\TwoColumnfalse
\ifTwoColumn
	\PassOptionsToClass{reprint}{revtex4-2}
\else
	\PassOptionsToClass{preprint}{revtex4-2}
\fi
\documentclass[aps,pre,nofootinbib,superscriptaddress]{revtex4-2}

\usepackage{amsmath,amssymb,amsthm,bm}
\usepackage{graphicx} 
\usepackage{physics}
\usepackage{booktabs}
\usepackage{microtype}
\usepackage{algorithmicx}
\usepackage{algpseudocode}

\usepackage{hyperref}

\newtheorem{theorem}{Theorem}

\begin{document}
	
\title{
		Finite-Time Observability of Oscillatory Instabilities in Synchronous p-bit Dynamics
	}
	
\author{Naoya Onizawa}
\email{naoya.onizawa.a7@tohoku.ac.jp}
\affiliation{Research Institute of Electrical Communication, Tohoku University, Japan}

\author{Shunsuke Koshita}
\affiliation{Hachinohe Institute of Technology, Japan}

\author{Takahiro Hanyu}
\affiliation{Research Institute of Electrical Communication, Tohoku University, Japan}

\begin{abstract}
	Synchronous update schemes in p-bit annealing offer a natural route to massive parallelism, but they can also induce period-2 oscillations that degrade optimization performance. In practical solvers, such oscillations matter only if they become observable within the finite runtime of the device or simulation, yet most existing analyses are formulated in terms of asymptotic stability. As a result, they do not directly address the experimentally relevant question of when oscillatory modes actually appear during finite-duration annealing.
	Here we develop a finite-time observability framework for synchronous tick-random p-bit dynamics. Starting from a linearized mean-field description, we derive a graph-dependent criterion that predicts whether unstable modes amplify enough within a finite observation window to produce visible signatures in quantities such as the one-step autocorrelation and the energy trace. This shifts the analysis from asymptotic instability to practical detectability and yields a principled estimate of the minimum reduction in synchrony required to suppress oscillations.
	We validate the framework on G-set benchmark graphs and illustrative graph families. The predicted thresholds capture both the graph dependence of oscillation onset and the finite-time conditions under which oscillations become practically observable. These results provide a graph-aware basis for selecting the update probability in synchronous p-bit annealers without relying on exhaustive instance-by-instance parameter sweeps.
	
\end{abstract}
	
	\maketitle
	
	\section{Introduction}
	
Synchronous update rules in stochastic spin systems provide an attractive route toward massive parallelism, yet they are well known to induce dynamical instabilities absent in classical single-spin dynamics.\cite{Glauber1963,Peretto1984,Little1974,Hopfield1982}
Previous analyses have primarily characterized such instabilities through asymptotic stability, stationary behavior, or infinite-time bifurcation structure.
In contrast, practical annealing solvers---both numerical and hardware-based---operate over a strictly finite time horizon, within which transient amplification rather than asymptotic divergence determines whether oscillations become observable.\cite{Trefethen1993,Schmid2007}
	
In this work, we introduce a finite-time observability framework for synchronous stochastic spin dynamics and show that period-2 oscillations in tick-random p-bit annealing arise from transiently amplified unstable modes.\cite{Reddy1993,FarrellIoannou1996}
By explicitly incorporating the observation horizon into the stability analysis, we derive graph-dependent conditions under which oscillations are practically visible or effectively suppressed, even when the underlying dynamics remain asymptotically unstable.
This is the main conceptual distinction from asymptotic analyses: our goal is not to determine whether instability exists in principle at long times, but whether it becomes detectable within the finite runtime relevant to annealing experiments.
	
Ising and p-bit based annealing methods have emerged as promising approaches
for solving combinatorial optimization problems and sampling from complex distributions.\cite{Camsari2017,Sutton2017,Borders2019,Glover2018QUBO,Barahona1988,BiqMacLibrary}
Recent hardware-oriented implementations emphasize parallel updates
to maximize throughput, including synchronous or partially synchronous update schemes.\cite{King2015}
Despite their computational advantages,
synchronous updates are known to fundamentally alter
the dynamical behavior of stochastic spin systems, relative to classical single-spin dynamics.\cite{Glauber1963,Little1974,Hopfield1982}
	
	In this work, we investigate oscillatory instabilities
	observed in synchronous tick-random p-bit annealing.
	Empirically, we find that full synchrony ($c=1$)
	often leads to persistent period-2 oscillations,
	particularly during the low-temperature phase of annealing.
	Here, $c$ controls the degree of partial synchrony (larger $c$ means fewer simultaneous updates per tick).
	These oscillations prevent the system from converging
	toward low-energy states.
	Increasing a single control parameter $c$,
	which reduces update simultaneity,
	systematically suppresses oscillations
	and recovers successful annealing behavior.
	
	We show that oscillations arise from transient amplification
	of unstable modes in the synchronous mean-field dynamics,
	and that the critical value $c^\star$
	depends on graph-specific spectral properties.
	Importantly, we formulate the problem in terms of finite-time observability
	rather than asymptotic stability,
	thereby aligning the theory with what is actually measurable in practical annealing implementations.
	
	\paragraph{Contributions}
	\begin{itemize}
		\item We formalize synchronous tick-random p-bit updates
		as a partial-synchrony stochastic process.
		\item We demonstrate that full synchrony induces
		period-2 oscillations via transient instability.
		\item We derive graph-dependent oscillation suppression conditions
		based on finite-time mean-field dynamics.
		\item We validate the theory on benchmark graphs
		and provide design rules for choosing $c$.
	\end{itemize}

\section{Model and Update Scheme}

\subsection{Ising energy and p-bit dynamics}

We consider an Ising system composed of $N$ binary stochastic variables (p-bits) $\sigma_i\in\{\pm1\}$, whose energy is defined as
\begin{equation}
	H(\boldsymbol{\sigma}) = -\frac{1}{2}\boldsymbol{\sigma}^\top J \boldsymbol{\sigma} - \boldsymbol{h}^\top \boldsymbol{\sigma},
\end{equation}
where $J$ is a symmetric coupling matrix with zero diagonal and $\boldsymbol{h}$ denotes an external bias field. Throughout this work, we set $\boldsymbol{h}=\boldsymbol{0}$ unless otherwise noted.

For all graph instances considered in this paper, including G-set benchmarks as well as illustrative graphs, we adopt a unified mapping between graph weights $w_{ij}$ and Ising couplings,
\begin{equation}
	J_{ij} = - w_{ij},
\end{equation}
so that minimizing the Ising energy is equivalent to maximizing the corresponding cut value up to an additive constant.

Each p-bit is updated according to a stochastic activation function,
\begin{equation}
	\sigma_i(t+1) = \mathrm{sgn}\!\left[\tanh\!\bigl(I_0 h_i(t)\bigr) - r_i(t)\right],
\end{equation}
where $h_i(t)=\sum_j J_{ij}\sigma_j(t)$ is the local field, $I_0$ controls the effective inverse temperature, and $r_i(t)$ is a uniformly distributed random number in $[-1,1]$.

\subsection{Tick-random synchronous update}

Time is discretized into units referred to as \emph{ticks}. One tick corresponds to a single global update opportunity for all p-bits. Unlike strictly asynchronous dynamics, where only one spin is updated at a time, or fully synchronous dynamics, where all spins are updated simultaneously, we employ a \emph{tick-random synchronous} update scheme as proposed in Ref.~\cite{Onizawa2024}.

Specifically, at each tick, every p-bit is independently selected for update with probability $p$. If selected, the p-bit updates its state according to Eq.~(3); otherwise, it retains its previous state. All update decisions are made simultaneously at the beginning of each tick, and selected p-bits are updated in parallel. This procedure results in a partially synchronous dynamics, interpolating continuously between sparse updates ($p\ll1$) and fully synchronous updates ($p=1$).

We define the parallelism parameter (inverse update probability) as $c=1/p$. Larger values of $c$ correspond to stronger partial deactivation and reduced synchrony.

\subsection{Finite-time observation and oscillatory behavior}

Because the dynamics are stochastic and the system is typically observed over a finite runtime, the notion of stability in this work is inherently finite-time. Persistent oscillations are identified not by asymptotic divergence but by observable temporal correlations within a given observation window.

We focus on period-2 oscillations, which are the dominant nontrivial collective mode induced by synchronous updates. As a primary indicator of oscillatory behavior, we measure the one-tick autocorrelation
\begin{equation}
	C(1)=\frac{1}{N}\sum_{i=1}^N \langle \sigma_i(t)\sigma_i(t+1)\rangle_t,
\label{eq:C1_def}
\end{equation}
where $\langle\cdot\rangle_t$ denotes a time average over the observation window. Values of $C(1)$ significantly below unity indicate frequent spin reversals between successive ticks and serve as a robust indicator of oscillatory dynamics.

In the following sections, we develop a theoretical framework to predict whether such oscillatory modes become observable within a finite number of ticks, depending on the graph structure, the inverse temperature $I_0$, and the parallelism parameter $c$.
	
	\section{Oscillation Phenomenon}
	
	In this section, we characterize the oscillatory behavior observed under synchronous tick-random updates and introduce the operational criterion used in the simulations.
	
	\subsection{Empirical observation}
	
	Under fully synchronous updates ($c=1$), we frequently observe persistent period-2 oscillations in the spin configuration.
	Specifically, a large fraction of spins alternates sign at every tick, leading to a nonconvergent energy trajectory that oscillates between two values.
	Such behavior is robust across random initial conditions and is observed for a wide range of graphs, including G-set benchmarks and illustrative instances.
	
	As the partial deactivation parameter $c$ is increased, the degree of synchrony is reduced and these oscillations are gradually suppressed.
	Beyond a graph-dependent threshold $c^\star$, the dynamics converge toward a stationary regime in which macroscopic observables, such as the energy, fluctuate only due to thermal noise.
	Importantly, the value of $c^\star$ varies significantly with graph structure, indicating that oscillation suppression cannot be characterized by a universal parameter choice.
	
	Figure~\ref{fig:osc_example} illustrates a representative energy trajectory exhibiting period-2 oscillations at small $c$, as well as their disappearance at larger $c$.
	
	\subsection{Definition of observable oscillation}
	
	Because the dynamics are stochastic and simulations are run for a finite number of ticks, oscillations are relevant only insofar as they are observable within a finite time window. In practice, we classify a trajectory as oscillatory when the one-step autocorrelation $C(1)$ falls below a threshold and the energy shows a consistent period-2 alternation over the observation window. In the results below, we use $C(1)<0.5$ as the primary criterion. This finite-time operational definition motivates the transient-growth analysis developed next.
	
	\section{Mean-Field Theory and Transient Stability}
	
	In this section, we develop a finite-time theory for the onset of oscillatory behavior. The analysis proceeds through a controlled modeling hierarchy: stochastic dynamics $\rightarrow$ mean-field map $\rightarrow$ linearization $\rightarrow$ Gaussian effective gain $\rightarrow$ finite-time observability criterion, with an additional IPR-based observability filter when comparing to macroscopic simulation data. The detailed derivation of the mean-field map, the linearization procedure, and the numerical evaluation strategy are given in the Supplemental Material (Secs.~III--VI).
	
	\subsection{Mean-field dynamical map}
	
	To analyze the collective dynamics, we consider the expectation value $m_i(t)=\mathbb{E}[\sigma_i(t)]$. Under a mean-field approximation, the synchronous tick-random update rule yields the deterministic map
	\begin{equation}
		\bm{m}(t+1)
		= (1-p)\bm{m}(t)
			+ p\tanh\left(I_0(\bm{h}+\bm{J}\bm{m}(t))\right).
			\label{eq:mf_map}
	\end{equation}
	The first term reflects partial deactivation, while the second captures the nonlinear response of the updated p-bits. Equation~\eqref{eq:mf_map} provides a compact description of how synchrony and interaction strength shape the macroscopic dynamics; a derivation from the stochastic update rule is given in Supplemental Material Sec.~III.
	
	\subsection{Linearization around fixed points}
	
	Linearizing Eq.~\eqref{eq:mf_map} around a fixed point $\bm{m}^\star$ gives $\delta\bm{m}(t+1)=\bm{A}\,\delta\bm{m}(t)$ with Jacobian
	\begin{equation}
		\bm{A}
		= (1-p)\bm{I} + p\,\bm{D}\,I_0\bm{J},
		\label{eq:jacobian}
	\end{equation}
	where $\bm{D}$ is the diagonal gain matrix of the nonlinear response evaluated at $\bm{m}^\star$ (see Supplemental Material Sec.~IV). The eigenmodes of $\bm{A}$ determine how perturbations evolve over successive ticks; period-2 oscillations are associated with modes on the negative real side of the spectrum.
		
		\subsection{Finite-time oscillation suppression}
		
		Classical stability analysis emphasizes the limit $t\to\infty$, whereas practical annealing runs only for a finite number of ticks. We therefore characterize oscillation onset by finite-time amplification. Our criterion is
		\begin{equation}
			\max_{\|\delta\bm{m}(0)\|=1}
			\|\bm{A}^T\delta\bm{m}(0)\|
			< R,
			\label{eq:finite_time}
		\end{equation}
		where $T$ is the observation horizon and $R>1$ is an observability threshold. Equation~\eqref{eq:finite_time} states that no perturbation grows enough within $T$ ticks to produce a detectable oscillatory signature. This shifts the stability question from asymptotic divergence to finite-time observability, which is the regime relevant to annealing hardware and finite-duration simulations (see Supplemental Material Sec.~V). In the numerical evaluation, this condition is implemented through a reduced spectral proxy rather than by directly computing the full finite-time operator norm; this approximation is summarized in Supplemental Material Sec.~VI.

	\subsection{Gaussian effective gain approximation}
	
	To obtain a tractable graph-level prediction, we replace the state-dependent gain matrix by a scalar effective gain
	\begin{equation}
		\alpha_{\mathrm{eff}}(I_0)
		= \mathbb{E}_{h\sim\mathcal{N}(0,\sigma_h^2)}
		\left[\sech^2(I_0 h)\right],
		\label{eq:alpha_eff}
	\end{equation}
	where $\sigma_h^2$ is the graph-dependent local-field variance. This approximation captures the saturation of the activation function at low temperatures and reduces the Jacobian to a form controlled by the graph spectrum and global parameters. It is expected to be most reliable for sufficiently large, heterogeneous graphs, where the local fields are approximately self-averaging; its accuracy degrades for very small or strongly structured graphs with discrete, non-Gaussian field distributions. Details are given in Supplemental Material Secs.~VII and VIII.
	This approximation follows standard mean-field treatments of disordered Ising models, where the local-field distribution is replaced by a Gaussian surrogate and the diagonal gain is averaged over that distribution.\cite{Mezard1987,Nishimori2001}

	The observability threshold $R$ in Eq.~\eqref{eq:finite_time} has a direct physical meaning: it is the minimum finite-time amplification required for an unstable mode to leave a visible signature in measured quantities such as the one-step autocorrelation or the energy trace. In this sense, $R$ is not a microscopic fitting parameter but part of the observation model connecting linear growth to experimentally or numerically resolvable oscillations. Throughout this work we use a fixed reference value $R=10$, which corresponds empirically to the onset of clear period-2 energy alternation and to the simulation-side criterion $C(1)<0.5$.

	The main conclusions are not sensitive to the precise choice of $R$. Varying $R$ over a moderate range changes the predicted numerical value of $c^\star$ only modestly and does not alter the qualitative ordering across graphs or the improved agreement obtained from the IPR-corrected theory; see Supplemental Material Sec.~XI and Fig.~S2.
	
\paragraph{Non-IPR vs IPR-corrected theory.}
The baseline theory selects the most unstable mode of $\bm{A}$ and therefore provides a clean graph-dependent stability bound. However, a highly localized unstable mode may leave only a weak signature in global observables such as the energy, even if it is spectrally dominant. We therefore also consider an IPR-corrected variant that emphasizes unstable modes with broader spatial support, since such modes couple more strongly to the macroscopic signals used to identify oscillations.\cite{Anderson1958} In this sense, the IPR correction is not an auxiliary fit, but an observability filter linking linear instability to what can actually be detected in finite-time trajectories. The technical construction is given in Supplemental Material Secs.~IX and X.

	\paragraph{Theory-to-data comparison.}
	The resulting theoretical stability boundary $c^*(I_0)$ is summarized in Fig.~\ref{fig:theory_boundary}.
	A quantitative comparison between theory and simulation is presented in Table~\ref{tab:theory_vs_sim}, and discussed in detail in the following sections.
	
\section{Numerical Experiments}

\subsection{Setup}

Although standard annealing increases $I_0$ from $I_0^{\min}$ to $I_0^{\max}$ over time, we use fixed-$I_0$ simulations when sweeping $c$. This choice ensures direct consistency with the theoretical analysis in Section~IV, which assumes time-invariant dynamics over a finite observation horizon. Fixing $I_0$ isolates the effect of partial synchrony and avoids conflating it with transient effects from a time-dependent schedule. We therefore focus on $I_0$ values near $I_0^{\max}$, where oscillatory instabilities are most pronounced in the low-temperature regime, providing a stringent test of oscillation suppression.

We simulate synchronous tick-random annealing for various $c$ values on eight G-set instances and a small set of illustrative graphs. Oscillations are detected using the operational criterion of Section~III, with $C(1)<0.5$ as the primary indicator and the energy trajectory used as a qualitative cross-check. The theory curves use a finite-time horizon $T=40$ and the reference threshold $R=10$, while the simulations use 40 ticks per run. Full simulation settings, graph metadata, and the precise temperature normalization are given in Supplemental Material Secs.~XII and XIII. Within the illustrative set, Erd\H{o}s--R\'enyi graphs are included as canonical synthetic random-graph baselines.\cite{Newman2010} Very small illustrative graphs are used mainly as qualitative demonstrations, since mean-field and Gaussian approximations are not expected to be quantitatively accurate in the extreme small-$N$ regime.

	\begin{figure*}[t]
		\centering
		\includegraphics[width=0.90\linewidth]{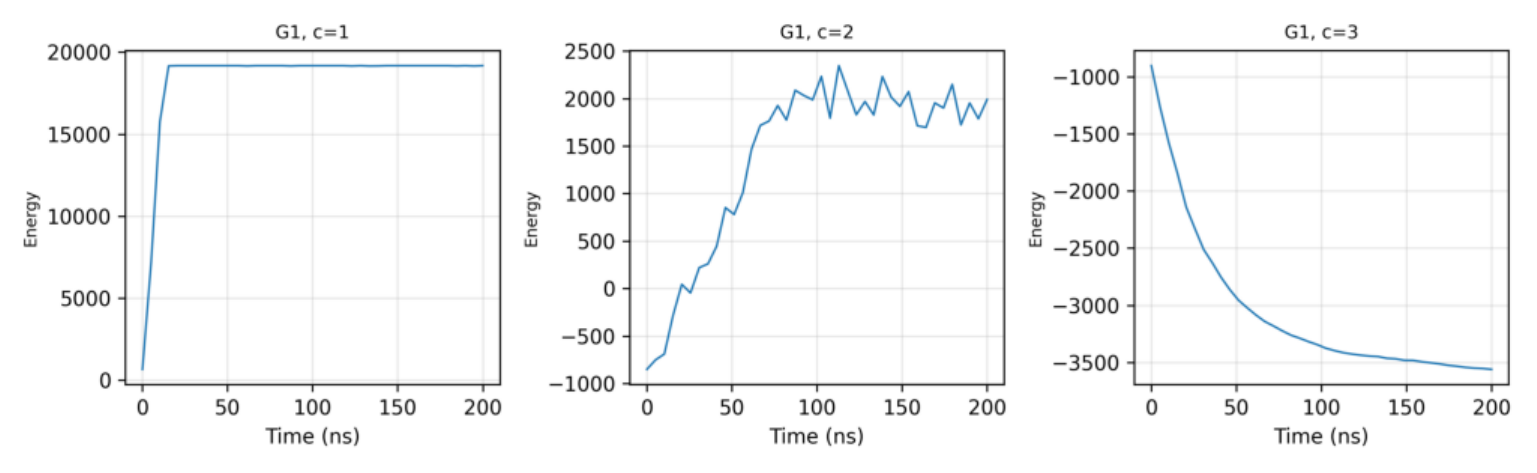}
		\caption{Energy trajectories for G1 at $c=1,2,3$, illustrating suppression of period-2 oscillations as synchrony is relaxed.}
		\label{fig:osc_example}
	\end{figure*}
	
	\begin{figure*}[t]
		\centering
		\begin{minipage}{0.90\linewidth}
			\centering
			\includegraphics[width=\linewidth]{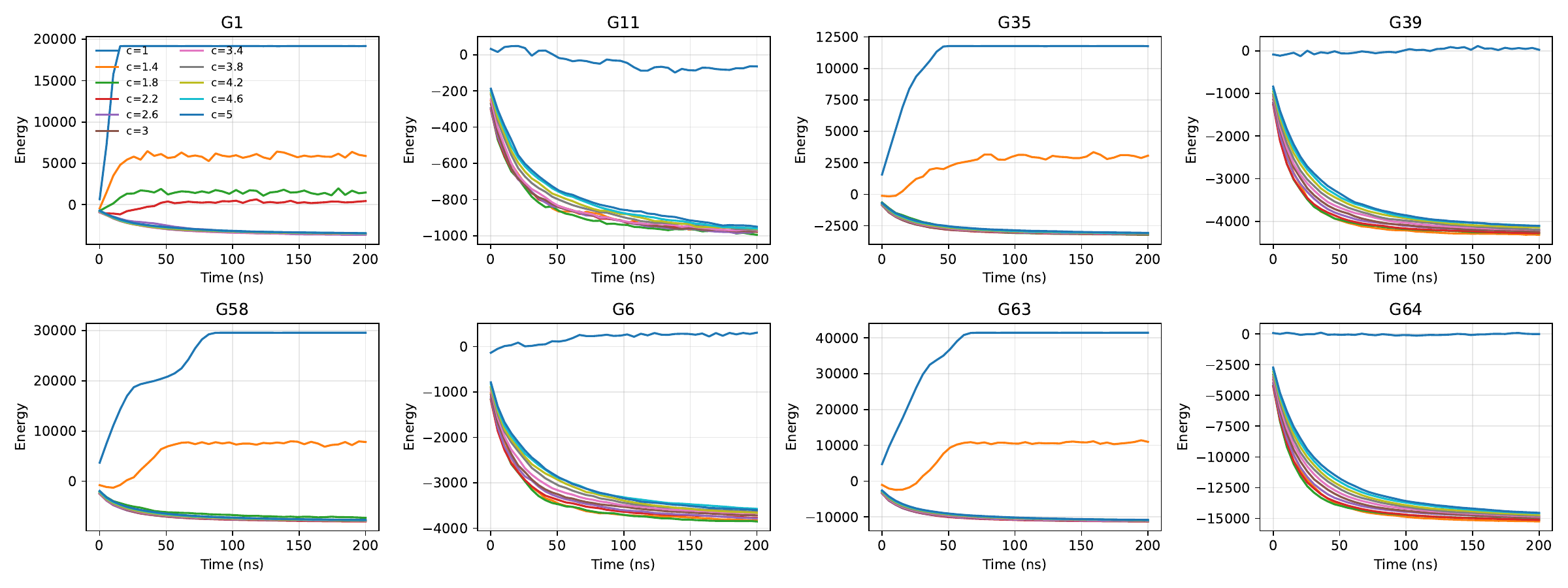}
			\\
			{\small\textbf{(a) G-set}}
		\end{minipage}
		\vspace{1ex}
		\begin{minipage}{0.90\linewidth}
			\centering
			\includegraphics[width=\linewidth]{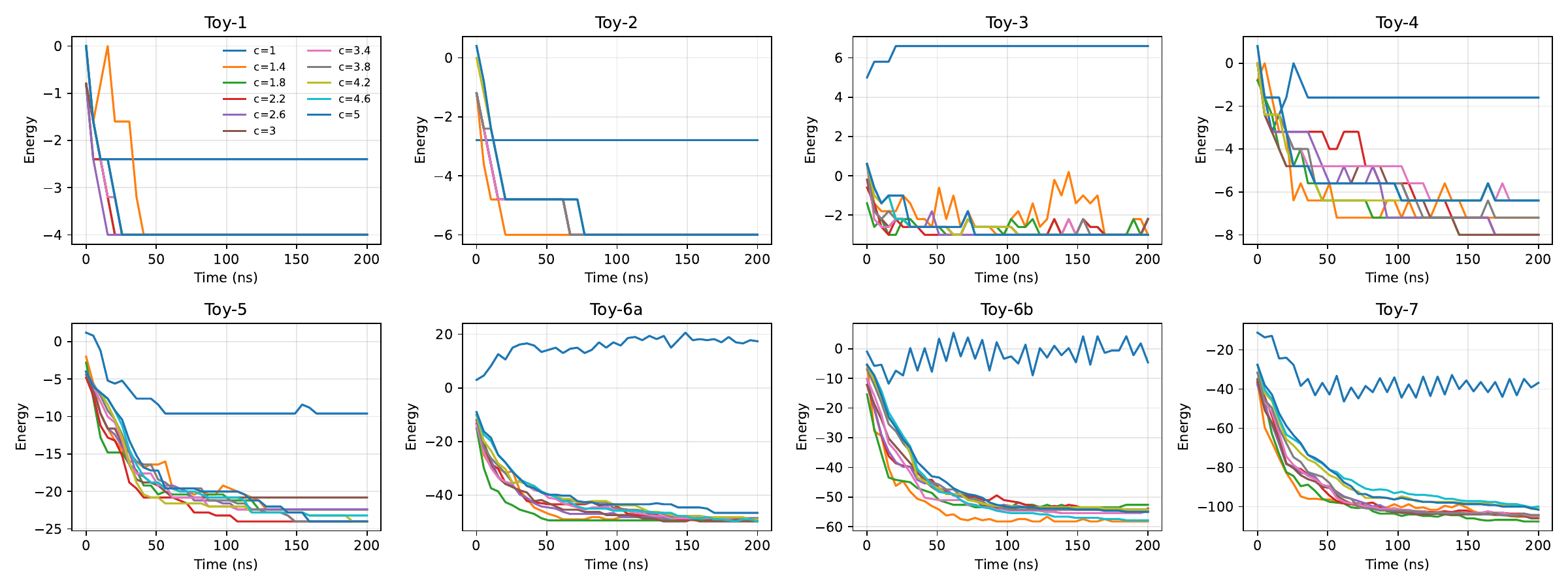}
			\\
			{\small\textbf{(b) Illustrative}}
		\end{minipage}
		\caption{Energy trajectories for multiple $c$ values (a $c$-sweep) with step 0.4, showing suppression of oscillations as $c$ increases.}
		\label{fig:c_sweep}
	\end{figure*}
	
	\subsection{Results}
	
	We observe good qualitative and semiquantitative agreement between theory and simulation for the critical behavior once finite-time observability is enforced.
	Across graphs, $c^\star$ increases as the system is driven deeper into the low-temperature regime,
	and the IPR-corrected theory captures the relative ordering of thresholds more consistently
	than the baseline bound.
	Residual discrepancies are attributable to finite-size effects,
	the difference between theoretical horizon $T$ and simulation duration,
	and discretization in the $c$ sweep.
	
	Beyond oscillation suppression itself, the simulations show a practical correlation between dynamical stabilization and optimization quality. Figure~\ref{fig:control_metrics} compares the normalized second-difference amplitude of the late-time energy trace with a normalized Max-Cut score as functions of $c$. In the representative G-set instances shown, the oscillatory regime is characterized by a large second-difference amplitude and poor solution quality, while increasing $c$ drives the system into a low-oscillation regime in which the optimization metric rapidly improves and then saturates. This supports the interpretation that finite-time oscillation suppression is not merely a dynamical criterion, but also a useful indicator for choosing a parallelism level that preserves solution quality.

	\begin{figure*}[t]
		\centering
		\includegraphics[width=0.96\linewidth]{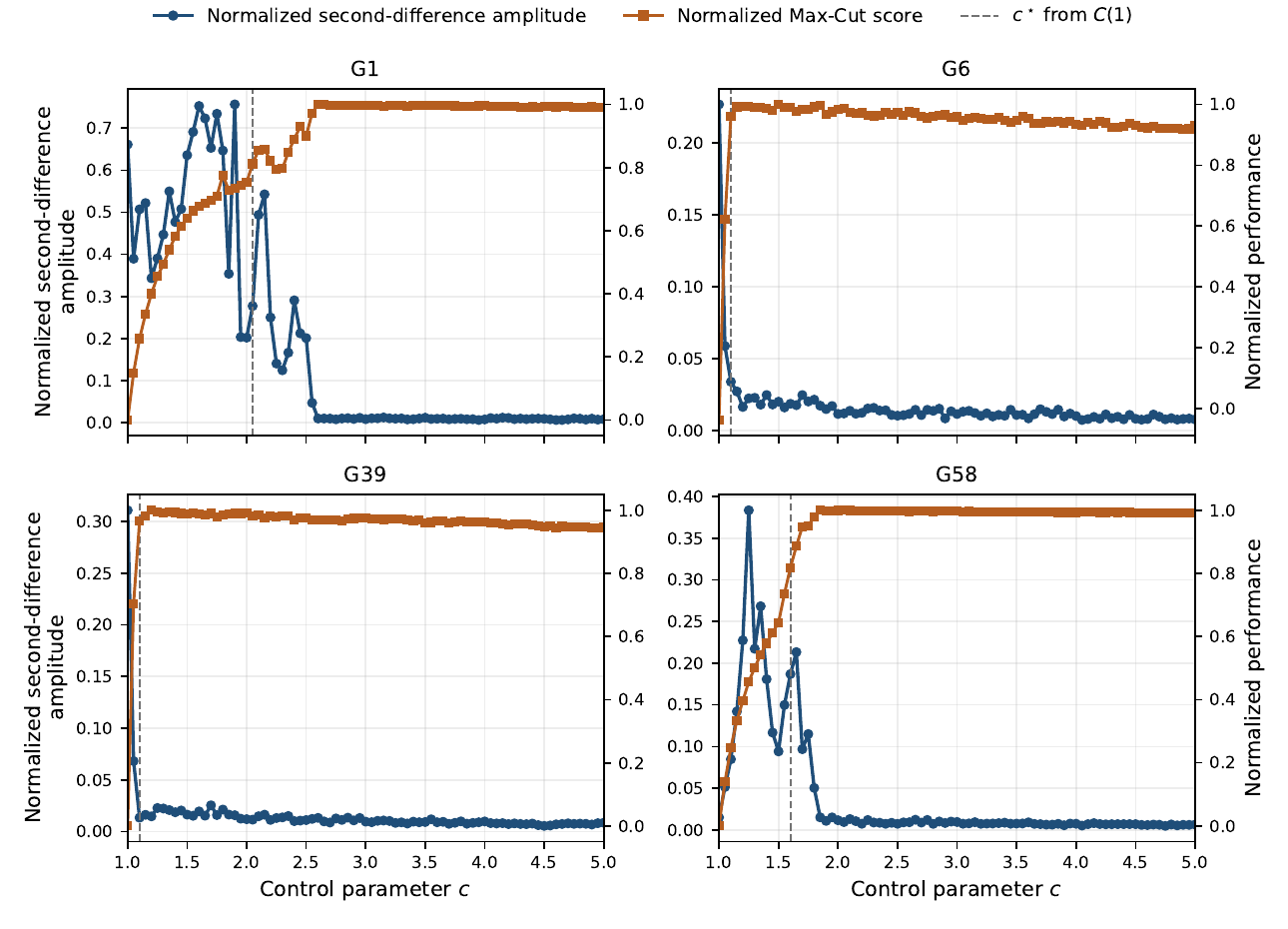}
		\caption{Normalized second-difference amplitude of the late-time energy trace and normalized optimization performance versus the partial-synchrony control parameter $c$ for representative G-set instances. The blue curves show an energy-based oscillation measure obtained from the late-time energy trace, while the orange curves show the normalized Max-Cut score obtained in the same runs. The dashed line marks the simulation threshold $c^\star$ determined from the autocorrelation criterion $C(1)\ge 0.5$. In each case, increasing $c$ suppresses the large-amplitude period-2 energy oscillations and is accompanied by a rapid recovery of optimization performance, after which the score saturates. The figure therefore links finite-time oscillation suppression directly to a practically relevant performance measure.}
		\label{fig:control_metrics}
	\end{figure*}

	\begin{figure*}[t]
		\centering
		\begin{minipage}{0.49\linewidth}
			\centering
			\includegraphics[width=\linewidth]{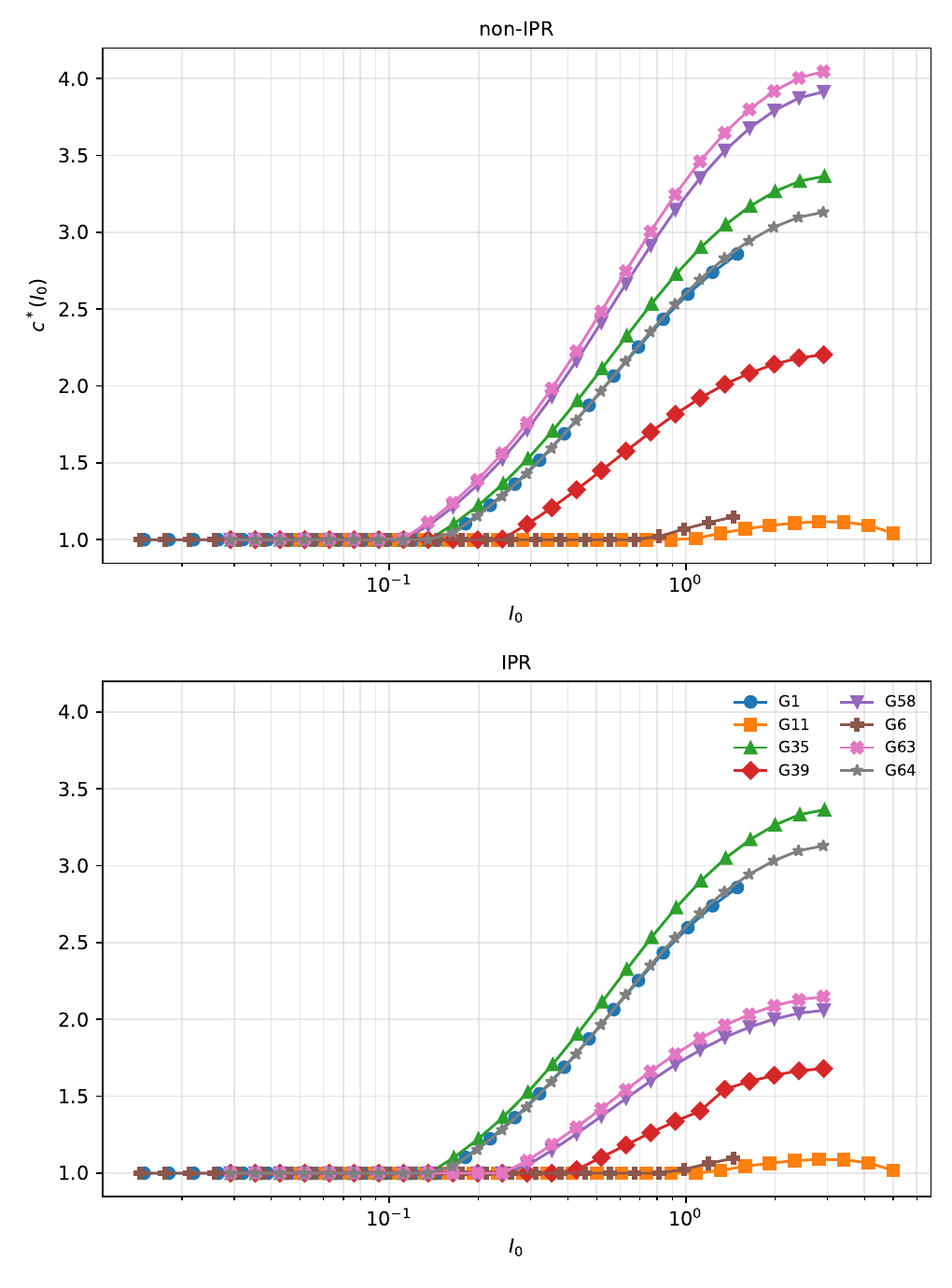}
			\\
			{\small\textbf{(a) G-set}}
		\end{minipage}
		\hfill
		\begin{minipage}{0.49\linewidth}
			\centering
			\includegraphics[width=\linewidth]{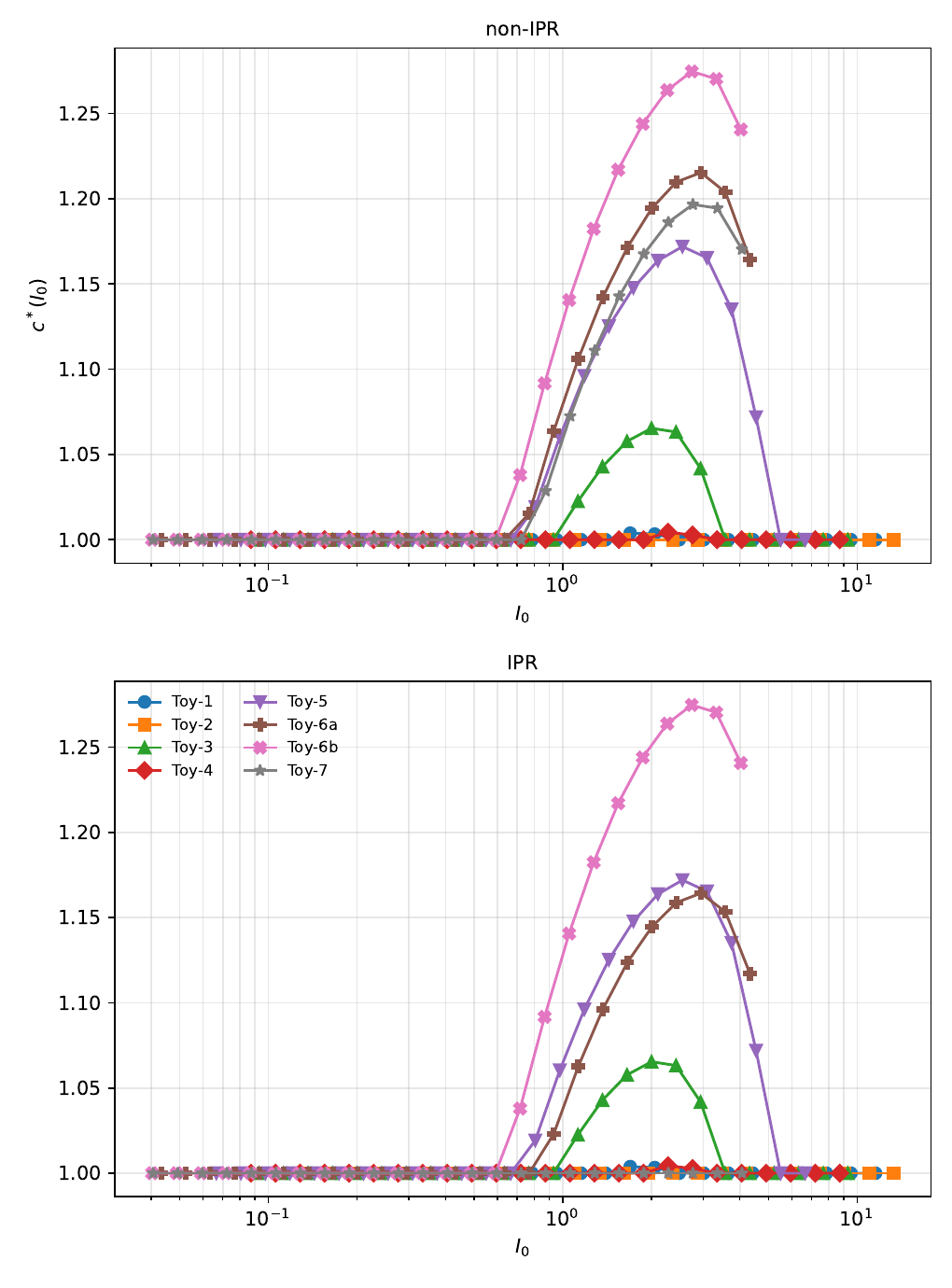}
			\\
			{\small\textbf{(b) Illustrative}}
		\end{minipage}
		\caption{Theoretical stability boundary $c^*(I_0)$ using Gaussian $\alpha_{\mathrm{eff}}(I_0)$ and a finite-time observability criterion. Each panel contains (top) non-IPR and (bottom) IPR-corrected curves, which emphasize unstable but spatially delocalized modes that couple more strongly to macroscopic observables. Small-$N$ illustrative curves are included for qualitative comparison; quantitative agreement improves for larger illustrative ER graphs (Toy-6/7).}
		\label{fig:theory_boundary}
	\end{figure*}
	
	\begin{table}[t]
		\centering
	\caption{Thresholds $c^\star$ from simulation and theory for each graph instance (G-set and illustrative). Theoretical values are evaluated at the nearest $I_0$ to the simulation's $I_0^{\max}$. Simulation thresholds use the $C(1)<0.5$ criterion.}
	\label{tab:theory_vs_sim}
	\begin{tabular}{lcccc}
		\toprule
		Graph & $I_0^{\max}$ (sim) & $c^\star$ (sim) & $c^\star$ (non-IPR) & $c^\star$ (IPR) \\
		\midrule
		G1 & 1.49 & 2.05 & 2.86 & 2.86 \\
		G11 & 5.01 & 1.05 & 1.04 & 1.02 \\
		G35 & 3.12 & 1.65 & 3.36 & 3.36 \\
		G39 & 3.11 & 1.10 & 2.20 & 1.68 \\
		G58 & 3.11 & 1.60 & 3.91 & 2.06 \\
		G6 & 1.45 & 1.10 & 1.15 & 1.10 \\
		G63 & 3.11 & 1.60 & 4.05 & 2.15 \\
		G64 & 3.11 & 1.10 & 3.13 & 3.13 \\
				\midrule
		Toy-1 & 11.55 & 1.20 & 1.00 & 1.00 \\
		Toy-2 & 13.33 & 1.05 & 1.00 & 1.00 \\
		Toy-3 & 9.30 & 1.30 & 1.00 & 1.00 \\
		Toy-4 & 8.73 & 1.05 & 1.00 & 1.00 \\
		Toy-5 & 6.69 & 1.05 & 1.00 & 1.00 \\
		Toy-6a & 4.41 & 1.05 & 1.16 & 1.12 \\
		Toy-6b & 4.09 & 1.10 & 1.24 & 1.24 \\
		Toy-7 & 4.17 & 1.05 & 1.17 & 1.00 \\
		\bottomrule
	\end{tabular}

		\end{table}
	
For the smallest illustrative systems ($N\le 16$), the theoretical boundary can saturate near $c^\star\approx 1$ while simulations still require slightly larger $c$. We interpret this as a consequence of the breakdown of self-averaging and the strongly non-Gaussian field distribution in very small systems, not as a breakdown of the finite-time observability picture. For larger illustrative ER graphs, the agreement improves, consistent with the expected increase in the validity of the Gaussian closure.
	
	\section{Discussion}
	
	The results presented in this work clarify that oscillations observed in synchronous p-bit annealing are fundamentally finite-time phenomena rather than manifestations of asymptotic instability.
	While classical linear stability analysis focuses on the infinite-time limit, such criteria can substantially overestimate instability in practical annealing processes, where dynamics are observed only over a limited runtime.
	By explicitly incorporating the observation horizon, our analysis provides a more faithful description of when oscillatory behavior becomes experimentally or numerically relevant.
	
	A central implication of this study is that the partial synchrony parameter $c$ should not be viewed merely as an ad hoc oscillation suppression knob.
	Instead, $c$ acts as a continuous control parameter that mediates a trade-off between massive parallelism and dynamical stability.
	Excessive reduction of synchrony undermines the computational advantage of synchronous updates, whereas insufficient reduction leads to observable oscillations.
	The proposed finite-time framework enables a principled, graph-dependent selection of $c$, thereby replacing costly empirical parameter sweeps with predictive design rules.
	
	The Gaussian effective-gain approximation plays a key role in rendering the theory tractable and in capturing the stabilizing effect of saturation at low temperatures.
	Although this approximation neglects higher-order correlations, it successfully explains the qualitative and, for sufficiently large graphs, quantitative trends observed in simulations.
Deviations for small illustrative systems highlight the limitations of mean-field and self-averaging assumptions rather than failures of the underlying dynamical picture.
	
	The introduction of the IPR-corrected criterion further emphasizes the distinction between mathematical instability and observable behavior.
	While the baseline (non-IPR) analysis identifies the most unstable eigenmodes, these modes can be highly localized and thus weakly coupled to macroscopic observables such as the energy.
	By prioritizing delocalized modes, the IPR-corrected variant effectively functions as an observation model, bridging the gap between theoretical predictions and experimentally accessible quantities.
	
	Several extensions of the present work are worth exploring.
	First, incorporating time-dependent annealing schedules $I_0(t)$ into the finite-time framework may further improve predictive power in practical settings.
	Second, extending the analysis beyond mean-field approximations could clarify the role of nonlinear effects and correlations in small or highly structured graphs.
	Finally, the present approach may inform the design of hardware p-bit annealers, where finite runtime, limited observability, and parallel update constraints are intrinsic rather than incidental.
	
	Overall, this work reframes oscillation suppression in synchronous p-bit annealing as a finite-time, graph-dependent parameter design problem.
	By doing so, it provides a theoretical foundation for exploiting parallelism without resorting to empirical trial-and-error tuning.
	
	\section{Conclusion}
	
	In this work, we developed a finite-time theoretical framework for synchronous tick-random p-bit annealing and demonstrated its ability to predict graph-dependent oscillatory behavior.
	By shifting the focus from asymptotic stability to finite-time observability, we showed that oscillations arise not as inherent instabilities but as transient phenomena that become detectable only under sufficient amplification within a limited runtime.
	
	The proposed analysis yields practical, graph-aware design rules for selecting the update probability $p=1/c$, thereby enabling stable annealing dynamics without sacrificing the parallelism advantages of synchronous updates.
Numerical experiments on G-set benchmarks and illustrative graphs confirm that incorporating finite-time observability and mode delocalization significantly improves agreement between theory and simulation.
	
	Beyond explaining oscillation suppression, the present framework reframes parameter tuning in synchronous p-bit annealing as a predictive design problem rather than an empirical trial-and-error process.
	This perspective is particularly relevant for large-scale and hardware-based Ising solvers, where exhaustive parameter sweeps are impractical.
	
	Future work will extend the observability-aware theory to time-dependent annealing schedules and hardware-specific constraints, such as communication delays, finite precision, and device variability.
	We expect that the finite-time approach introduced here will provide a useful foundation for the systematic design of scalable and efficient parallel Ising machines.

\paragraph{Code availability.}
The simulation and analysis codes used in this study are publicly available at \url{https://github.com/nonizawa/sync-pbit-theory}.

\begin{acknowledgments}
This work was supported in part by KIOXIA Corporation and by a research grant from the Murata Science and Education Foundation.
\end{acknowledgments}

\clearpage
\setcounter{section}{0}
\setcounter{subsection}{0}
\setcounter{equation}{0}
\setcounter{figure}{0}
\setcounter{table}{0}
\renewcommand{\theequation}{S\arabic{equation}}
\renewcommand{\thefigure}{S\arabic{figure}}
\renewcommand{\thetable}{S\arabic{table}}
\makeatletter
\renewcommand{\theHsection}{supp.\Roman{section}}
\renewcommand{\theHsubsection}{supp.\Roman{section}.\arabic{subsection}}
\renewcommand{\theHequation}{supp.\arabic{equation}}
\renewcommand{\theHfigure}{supp.\arabic{figure}}
\renewcommand{\theHtable}{supp.\arabic{table}}
\makeatother

\section*{Supplemental Material}
\section{Scope of This Supplemental Material}

This Supplemental Material collects technical details that support the main text while keeping the main presentation focused on the central physical message. In particular, we provide: (i) the explicit stochastic update rule and pseudocode, (ii) the derivation of the mean-field map and its linearization, (iii) the finite-time observability criterion and the spectral approximation used in the numerical code, (iv) additional detail on the Gaussian effective-gain approximation and its limitations, (v) the technical construction of the IPR-corrected observability metric, and (vi) extended numerical settings, graph metadata, and sensitivity to the observability threshold $R$.

Throughout this Supplemental Material, all time variables are measured in ticks, and the update probability is denoted by $p=1/c$.

\section{Stochastic Update Rule and Algorithmic Form}

We consider an Ising system of $N$ stochastic binary variables $\sigma_i\in\{\pm1\}$ with energy
\begin{equation}
H(\boldsymbol{\sigma})=
-\frac{1}{2}\boldsymbol{\sigma}^{\top}J\boldsymbol{\sigma}
-\boldsymbol{h}^{\top}\boldsymbol{\sigma},
\label{eqS:energy}
\end{equation}
where $J$ is symmetric with zero diagonal. In the graph instances used in this work, the coupling matrix is obtained from graph weights $w_{ij}$ through
\begin{equation}
J_{ij}=-w_{ij},
\label{eqS:Jmap}
\end{equation}
so that minimizing the Ising energy is equivalent to maximizing the cut value up to an additive constant.

Each selected p-bit is updated according to
\begin{equation}
\sigma_i(t+1)=
\mathrm{sgn}\!\left[\tanh\!\bigl(I_0 h_i(t)\bigr)-r_i(t)\right],
\label{eqS:stochastic_update}
\end{equation}
with local field
\begin{equation}
h_i(t)=\sum_j J_{ij}\sigma_j(t),
\label{eqS:local_field}
\end{equation}
inverse-temperature parameter $I_0$, and a uniform random variable $r_i(t)\in[-1,1]$.

Under tick-random synchronous dynamics, each p-bit is independently selected for update with probability $p=1/c$ at each tick. If selected, it is updated according to Eq.~(\ref{eqS:stochastic_update}); otherwise, it retains its previous state.

\begin{figure}[t]
\centering
\caption{Synchronous tick-random p-bit annealing with partial synchrony parameter $c$.}
\label{figS:algorithm}
\begin{minipage}{0.95\linewidth}
\begin{algorithmic}[1]
\Require Couplings $\bm{J}$, fields $\bm{h}$, schedule $I_0(t)$, parameter $c\ge 1$, horizon $T$
\State Initialize spins $\bm{\sigma}(0)\in\{-1,+1\}^N$
\For{$t=0,1,\dots,T-1$}
\State $p\gets 1/c$
\State Sample mask $\bm{u}(t)\in\{0,1\}^N$ i.i.d. with $\Pr(u_i=1)=p$
\ForAll{$i$ with $u_i(t)=1$ \textbf{in parallel}}
\State $I_i(t)\gets I_0(t)\left(h_i+\sum_j J_{ij}\sigma_j(t)\right)$
\State Draw $\sigma_i(t+1)\in\{-1,+1\}$ with $\Pr(\sigma_i(t+1)=+1)=\frac12[1+\tanh(I_i(t))]$
\EndFor
\ForAll{$i$ with $u_i(t)=0$ \textbf{in parallel}}
\State $\sigma_i(t+1)\gets \sigma_i(t)$
\EndFor
\EndFor
\State \Return $\{\bm{\sigma}(t)\}_{t=0}^{T}$
\end{algorithmic}
\end{minipage}
\end{figure}

From an implementation standpoint, each tick consists of computing local fields and sampling new states for the active spins. For sparse $J$, the field computation scales as $O(|E|)$ per tick; for dense $J$, it scales as $O(N^2)$.

\section{Mean-Field Reduction}

To characterize the collective dynamics, we introduce the single-site expectations
\begin{equation}
m_i(t)=\mathbb{E}[\sigma_i(t)].
\label{eqS:mi_def}
\end{equation}
Under a mean-field closure that neglects higher-order correlations, the update rule yields a deterministic map for $\bm{m}(t)$.

For a selected spin, averaging Eq.~(\ref{eqS:stochastic_update}) over $r_i(t)$ gives
\begin{equation}
\mathbb{E}\!\left[\sigma_i(t+1)\mid h_i(t)\right]
=\tanh\!\bigl(I_0 h_i(t)\bigr),
\label{eqS:conditional_mean}
\end{equation}
because the probability of the next state being $+1$ is
\begin{equation}
\Pr\!\left[\sigma_i(t+1)=+1\mid h_i(t)\right]
=\frac12\left[1+\tanh\!\bigl(I_0 h_i(t)\bigr)\right].
\label{eqS:bernoulli_mean}
\end{equation}

Since a site updates with probability $p$ and is otherwise unchanged, we obtain
\begin{equation}
m_i(t+1)
=(1-p)m_i(t)
+p\,\mathbb{E}\!\left[\tanh\!\left(I_0\sum_j J_{ij}\sigma_j(t)\right)\right].
\label{eqS:pre_mf}
\end{equation}
Approximating the fluctuating field by its mean-field value $\sum_j J_{ij}m_j(t)$ gives
\begin{equation}
\bm{m}(t+1)
=(1-p)\bm{m}(t)
+p\,\tanh\!\left(I_0(\bm{h}+\bm{J}\bm{m}(t))\right),
\label{eqS:mf_map}
\end{equation}
where the hyperbolic tangent is applied componentwise.

Equation~(\ref{eqS:mf_map}) is the deterministic map used in the theoretical analysis. The first term reflects partial deactivation, while the second describes the nonlinear response of updated p-bits.

\section{Linearization and Jacobian Structure}

Let $\bm{m}^\star$ be a fixed point of Eq.~(\ref{eqS:mf_map}). Writing $\delta\bm{m}(t)=\bm{m}(t)-\bm{m}^\star$ and linearizing around $\bm{m}^\star$ yields
\begin{equation}
\delta\bm{m}(t+1)=\bm{A}\,\delta\bm{m}(t),
\label{eqS:linearized_map}
\end{equation}
with Jacobian
\begin{equation}
\bm{A}=(1-p)\bm{I}+p\,\bm{D}\,I_0\bm{J},
\label{eqS:jacobian}
\end{equation}
where
\begin{equation}
\bm{D}
=\mathrm{diag}\!\left(
1-\tanh^2\!\left(I_0(\bm{h}+\bm{J}\bm{m}^\star)\right)
\right).
\label{eqS:Dmatrix}
\end{equation}

The matrix $\bm{D}$ contains the local gains of the nonlinear activation function evaluated at the fixed point. Saturated sites contribute smaller diagonal gain, while unsaturated sites contribute larger gain. The eigenmodes of $\bm{A}$ determine how perturbations evolve over successive ticks.

For the oscillatory behavior of interest here, period-2 modes are associated with eigenvalues on the negative real axis. More generally, observable oscillations arise when perturbations are amplified sufficiently over the finite observation window.

\section{Finite-Time Observability Criterion}

The main text focuses on finite-time observability rather than asymptotic stability. The corresponding criterion is
\begin{theorem}[Finite-time oscillation suppression]
Let $\bm{A}$ be the Jacobian of the mean-field map in Eq.~(\ref{eqS:jacobian}). If for a finite horizon $T$,
\begin{equation}
\max_{\|\delta\bm{m}(0)\|=1}\|\bm{A}^{T}\delta\bm{m}(0)\|<R,
\label{eqS:finite_time}
\end{equation}
for an observability threshold $R>1$, then no observable oscillation emerges within $T$ steps.
\end{theorem}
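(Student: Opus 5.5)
The plan is to make the statement precise through the observation model described in the main text, and then derive it from submultiplicativity of the induced operator norm. Within the linearized description $\delta\bm{m}(t+1)=\bm{A}\,\delta\bm{m}(t)$, Eq.~(\ref{eqS:linearized_map}), an oscillation counts as \emph{observable} within $T$ ticks exactly when some normalized initial perturbation is amplified by a factor of at least $R$ at some tick $t\le T$. This is the same convention used to define $R$: it is the minimum amplification that leaves a visible period-2 signature, corresponding to $C(1)<0.5$. With this definition the theorem reduces to a statement about the growth of $\|\bm{A}^t\|$ for $t\le T$. Its proof has two parts. First, a bound at the horizon must be transferred to all intermediate times. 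Second, period-2 components must be identified with the growth being measured.

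Step 1 writes $\delta\bm{m}(t)=\bm{A}^t\delta\bm{m}(0)$ and notes that the hypothesis reads $\|\bm{A}^T\|_2<R$. Step 2 treats the intermediate times $t<T$. The difficulty is that the map $t\mapsto\|\bm{A}^t\|$ need not be monotone for a general nonnormal matrix. Here, however, $\bm{A}=(1-p)\bm{I}+pI_0\bm{D}\bm{J}$ with $\bm{D}$ positive diagonal and $\bm{J}$ symmetric. The matrix $\bm{A}$ is therefore similar, via $\bm{D}^{1/2}$, to the symmetric matrix $\tilde{\bm{A}}=(1-p)\bm{I}+pI_0\bm{D}^{1/2}\bm{J}\bm{D}^{1/2}$.

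In the $\bm{D}^{-1/2}$-weighted norm, $\|\bm{A}^t\|=\rho(\bm{A})^t$, and this is monotone in $t$. Hence, in that norm, $\max_{t\le T}\|\bm{A}^t\|\le\max(1,\rho^T)$. For $\rho\ge1$ this quantity is at most $\|\bm{A}^T\|<R$. For $\rho<1$ it equals $1$, which is below $R$ because $R>1$. Under the Gaussian effective-gain approximation of Eq.~(\ref{eq:alpha_eff}), $\bm{D}$ is scalar, so $\bm{A}$ is itself symmetric and no change of norm is needed. In general, I would convert between the two norms using the condition number $\kappa(\bm{D}^{1/2})$ and absorb this factor into $R$, stating that correction explicitly.

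Step 3 connects the growth to period-2 oscillations. I would diagonalize $\tilde{\bm{A}}$ and project the perturbation onto the eigenmodes with negative eigenvalues; these are the modes that flip sign every tick. Each such projection is bounded in norm by the total perturbation, so its amplitude also stays below $R$ for every $t\le T$. A sub-threshold alternating component cannot drive $C(1)$ below the observation threshold, by the definition of $R$.

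I expect the main obstacle to be Step 2 in the general nonnormal case. There the conclusion holds only up to the similarity constant, or requires a supplementary argument. I would first try to exploit the symmetrizability of $\bm{A}$ as described above. If that proves insufficient, I would strengthen the hypothesis to $\max_{t\le T}\|\bm{A}^t\|<R$ and remark that, for the spectral proxy actually used in the computations, this strengthened condition coincides with the stated one.
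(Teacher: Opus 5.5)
Your proposal is correct, and it is more careful than the paper's own proof. The paper's proof is purely definitional. It identifies an observable oscillation with a perturbation amplified to the level $R$, then reads Eq.~(\ref{eqS:finite_time}) as saying that every unit-norm perturbation stays below $R$ ``over the full observation window.'' It does not justify the passage from $t=T$ to intermediate $t<T$, and it does not single out the period-2 components.

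Your Step~3 closes with the same definitional move, so the core is shared; what you add is Step~2, and it is sound. Because $\bm{D}$ is strictly positive, $\tilde{\bm{A}}=\bm{D}^{-1/2}\bm{A}\bm{D}^{1/2}$ is symmetric. With $\|x\|_W=\|\bm{D}^{-1/2}x\|_2$ one therefore has $\|\bm{A}^t\|_W=\rho^t$. The Euclidean hypothesis enters only through $\rho^T=\rho(\bm{A}^T)\le\|\bm{A}^T\|_2<R$. State that inequality explicitly, since it is the only link between the two norms.

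Your caveat for the general Euclidean case is genuine, not an artifact of your method. Take $p=1$, a single edge, $I_0=10$ and $\bm{D}=\mathrm{diag}(1,10^{-3})$, which is attainable at a fixed point for a suitable bias $\bm{h}$. Then $\bm{A}^2=0.1\,\bm{I}$ but $\|\bm{A}\|_2=10$. So for $T=2$, $R=5$ the hypothesis holds, yet a unit perturbation is amplified tenfold at $t=1$. This stays within your bound $\kappa(\bm{D}^{1/2})\max(1,\rho^T)=\sqrt{1000}$.

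The paper's argument buys brevity, but it is literally valid only if ``within $T$ steps'' means ``at the horizon,'' or if monotone growth is tacitly assumed. Your argument specifies when the conclusion holds at every tick:
\begin{itemize}
\item exactly, in the weighted norm;
\item exactly, in the Euclidean norm, under the scalar Gaussian gain of Eq.~(\ref{eqS:gaussian_jacobian}) that the predictions actually use;
\item up to $\kappa(\bm{D}^{1/2})=\sqrt{d_{\max}/d_{\min}}$ otherwise.
\end{itemize}
Your remark that the code's proxy $|\lambda_{\min}|^T$ is insensitive to this distinction is also correct. Finally, doing the Step~3 projections in the $\tilde{\bm{A}}$ coordinates, as you propose, keeps them orthogonal and commuting with $\tilde{\bm{A}}$. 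They therefore cost nothing beyond that single factor $\kappa$.
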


\begin{proof}
Observable oscillations require transient amplification of perturbations to a level that becomes detectable in macroscopic observables such as the energy or one-step autocorrelation. Equation~(\ref{eqS:finite_time}) states that every unit-norm perturbation remains below the observability threshold over the full observation window. Under this condition, no linearly amplified mode reaches an amplitude large enough to produce a detectable period-2 signature within $T$ steps.
\end{proof}

The threshold $R$ should be viewed as part of the observation model rather than as a microscopic constant. It parameterizes the minimum finite-time amplification required for an unstable mode to generate a visible oscillatory signature in measured observables.

\section{Spectral Product Approximation Used in Numerical Evaluation}

For the numerical phase-boundary calculation used in the code, we employ a reduced spectral proxy for Eq.~(\ref{eqS:finite_time}). Specifically, the growth factor is approximated as
\begin{equation}
G=\prod_{t=1}^{T}|\lambda_{\min}(t)|,
\label{eqS:growth_factor}
\end{equation}
where $\lambda_{\min}(t)$ denotes the minimum eigenvalue of the Jacobian at tick $t$. For fixed $I_0$, this eigenvalue is time independent and Eq.~(\ref{eqS:growth_factor}) reduces to a power law in $T$.

In practice, the numerical criterion is implemented as
\begin{equation}
\log G \ge \log R.
\label{eqS:log_growth_criterion}
\end{equation}
This approximation replaces the operator norm in Eq.~(\ref{eqS:finite_time}) by a spectral product based on the most unstable oscillatory mode. It is therefore best interpreted as an efficient observability-oriented proxy rather than an exact evaluation of the finite-time norm growth.

\section{Gaussian Effective-Gain Approximation}

The exact Jacobian in Eq.~(\ref{eqS:jacobian}) depends on the fixed point through the state-dependent diagonal matrix $\bm{D}$. To obtain a tractable graph-level prediction, we replace $\bm{D}$ by a scalar effective gain,
\begin{equation}
\alpha_{\mathrm{eff}}(I_0)
=
\mathbb{E}_{h\sim\mathcal{N}(0,\sigma_h^2)}
\left[\sech^2(I_0 h)\right],
\label{eqS:alpha_eff}
\end{equation}
with local-field variance
\begin{equation}
\sigma_h^2=\frac{1}{N}\sum_{i=1}^{N}\sum_{j=1}^{N}J_{ij}^2.
\label{eqS:sigma_h}
\end{equation}

Under this approximation,
\begin{equation}
\bm{A}\approx (1-p)\bm{I}+p\,\alpha_{\mathrm{eff}}(I_0)\,I_0\bm{J}.
\label{eqS:gaussian_jacobian}
\end{equation}

The physical idea is that the local fields are replaced by a Gaussian random variable with graph-dependent variance. The nonlinear gain $1-\tanh^2(I_0 h)=\sech^2(I_0 h)$ is then averaged over that field distribution. This approximation captures the saturation of the activation function at low temperatures while removing the need to compute the exact fixed point.

For sufficiently large, disordered graphs, this treatment is consistent with standard mean-field reasoning, where sums of many heterogeneous couplings generate an approximately Gaussian field distribution. The resulting effective Jacobian depends only on the graph spectrum and global parameters such as $I_0$ and $p$.

\section{Validity Limits of the Gaussian and Self-Averaging Assumptions}

The Gaussian effective-gain approximation is not expected to be uniformly accurate across all graph classes. Its limitations are most visible in very small or highly structured systems, where the local-field distribution is discrete, strongly non-Gaussian, and lacks self-averaging.

This limitation is particularly relevant for the smallest illustrative graphs used in the main text. In these systems, the theoretical boundary can saturate near $c^\star\approx 1$, while simulations may still require slightly larger values of $c$ to suppress oscillations. We interpret this discrepancy as a limitation of the mean-field and Gaussian closures rather than as a failure of the finite-time observability picture itself.

As the system size increases, especially for the larger illustrative Erd\H{o}s--R\'enyi graphs and for the G-set instances, the agreement between the Gaussian theory and simulation improves. This is consistent with the expectation that self-averaging becomes more reliable for larger heterogeneous graphs.

\section{IPR-Corrected Observability Metric}

The baseline theory evaluates instability using the most unstable eigenmode of the Jacobian. However, unstable modes that are spatially localized may couple only weakly to macroscopic observables such as the total energy. To better align the theory with what is observable in numerical trajectories, we introduce an IPR-based correction.

For a normalized eigenvector $\bm{v}$, the inverse participation ratio is
\begin{equation}
\mathrm{IPR}(\bm{v})=\sum_{i=1}^{N}v_i^4,
\qquad
\sum_{i=1}^{N}v_i^2=1.
\label{eqS:ipr}
\end{equation}
Large IPR indicates a localized mode, while small IPR indicates a delocalized mode.

We define the observability-oriented mode score
\begin{equation}
\mathcal{S}_{\ell}=
\frac{|\lambda_{\ell}|}{\mathrm{IPR}(\bm{v}_{\ell})^{\gamma}},
\label{eqS:ipr_score}
\end{equation}
where $\lambda_{\ell}$ and $\bm{v}_{\ell}$ are the eigenvalue and eigenvector of the candidate mode, and $\gamma>0$ controls the strength of the localization penalty. In this work, we use $\gamma=1$ unless otherwise stated.

The IPR-corrected boundary is then constructed by prioritizing unstable modes with large $\mathcal{S}_{\ell}$ rather than simply selecting the mode with the most negative eigenvalue. This makes the theory more sensitive to delocalized unstable modes, which are more likely to produce visible signatures in global observables.

\section{Numerical Mode-Selection Details}

In the numerical evaluation of the IPR-corrected theory, a fixed number of low-lying eigenmodes is inspected for each graph and parameter set. Highly localized modes are filtered through the IPR-based ranking, and the observability condition is evaluated using the best-ranked unstable candidates. These mode-selection settings are fixed across all graph instances and are not retuned on a graph-by-graph basis.

The role of the IPR correction is therefore not to fit each individual dataset, but to incorporate a simple observation model distinguishing mathematically unstable modes from modes that are likely to be visible in a finite-time energy trajectory.

\section{Sensitivity to the Observability Threshold \texorpdfstring{$R$}{R}}

In the main text, we use $R=10$ as a representative observation threshold. This value empirically corresponds to the appearance of clear period-2 energy alternations and to the simulation-side oscillation criterion based on the one-step autocorrelation $C(1)<0.5$.

The value of $R$ is not a first-principles constant. Instead, it parameterizes the minimum transient amplification required to convert an unstable mode into a visible finite-time signal. Varying $R$ therefore shifts the numerical location of the predicted threshold $c^\star$.

In sensitivity checks with $R=5,10,20$, the numerical values of the predicted boundaries change modestly, but the main qualitative trend remains unchanged: finite-time observability improves graph-dependent prediction relative to asymptotic reasoning, and the IPR-corrected theory generally remains closer to the simulation thresholds than the baseline non-IPR evaluation.

\begin{figure}[t]
\centering
\includegraphics[width=0.98\linewidth]{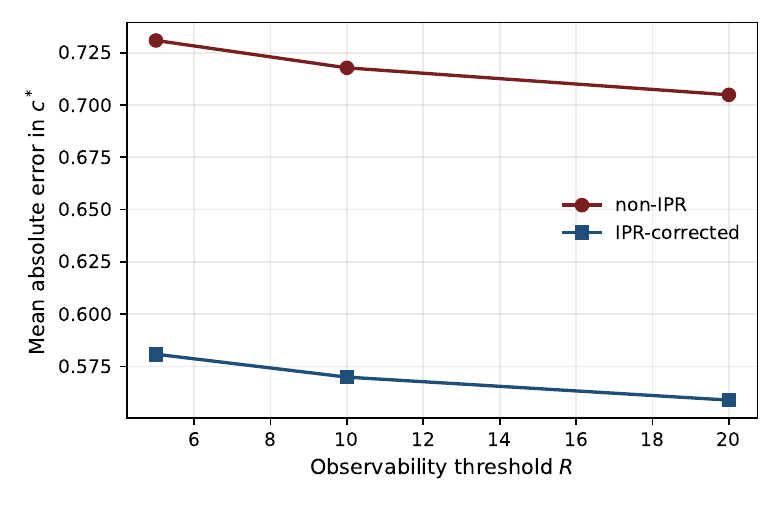}
\caption{Sensitivity of the theoretical threshold prediction to the observability parameter $R$. Shown is the mean absolute error in the predicted $c^\star$ relative to the simulation-derived threshold, averaged over all 16 graph instances used in the comparison (G-set and illustrative graphs), comparing the baseline non-IPR theory and the IPR-corrected theory for $R=5,10,20$. The quantitative error changes only modestly across this range, while the IPR-corrected prediction remains consistently closer to the simulation thresholds. This supports the interpretation of $R$ as an observation-model parameter rather than a finely tuned fit parameter.}
\label{figS:r_robustness}
\end{figure}

\section{Additional Numerical Details}

The simulations in the main text use fixed-$I_0$ sweeps over the partial-synchrony parameter $c$, rather than a time-dependent annealing schedule. This choice isolates the effect of synchrony and aligns the simulations directly with the time-invariant finite-time theory.

Oscillations are detected primarily through the one-step autocorrelation
\begin{equation}
C(1)=\frac{1}{N}\sum_{i=1}^{N}\langle \sigma_i(t)\sigma_i(t+1)\rangle_t,
\label{eqS:C1}
\end{equation}
where the average is taken over the observation window. In the simulations, a run is classified as oscillatory when $C(1)<0.5$, with the energy trajectory used as a qualitative consistency check.

For the control-metric figure in the main text, we also evaluate an energy-based oscillation measure from the late-time energy trace. Specifically, if $E_t$ denotes the energy trace after discarding an initial transient segment, we compute the mean absolute second difference, $\langle |E_{t+1}-2E_t+E_{t-1}| \rangle$, and normalize it by the late-time energy range. The resulting quantity, reported in the figure as the normalized second-difference amplitude of the late-time energy trace, is large when the energy exhibits strong period-2 alternation and small when the late-time dynamics are nearly stationary.

The graph-dependent inverse-temperature range is set by
\begin{equation}
s_J = \frac{1}{N}\sum_{i=1}^{N}\sqrt{(N-1)\,\mathrm{Var}_{j}\!\left(J_{ij}\right)},
\label{eqS:sJ}
\end{equation}
and the reference schedule endpoints are
\begin{equation}
I_0^{\min}=0.1/s_J,
\qquad
I_0^{\max}=10/s_J.
\label{eqS:I0schedule}
\end{equation}

\begin{table*}[t]
\caption{Simulation settings used throughout the study.}
\label{tabS:settings}
\centering
\begin{tabular}{ll}
\toprule
Item & Value / Description \\
\midrule
Graph instances & G1, G6, G11, G35, G39, G58, G63, G64 and Toy-1--Toy-7 \\
Update scheme & tick-random synchronous, $p=1/c$ \\
Anneal schedule & constant $I_0=I_0^{\max}$ in the fixed-$I_0$ sweeps \\
Simulation duration & 200 ns, $\Delta t=5$ ns (40 ticks) \\
$c$ sweep & 1.0 to 5.0 in steps of 0.05 \\
Oscillation criterion & $C(1)<0.5$ with energy-based qualitative confirmation \\
Repeats & 5 random seeds per configuration \\
\bottomrule
\end{tabular}
\end{table*}

\section{Graph Instances and Metadata}

\begin{table}[t]
\caption{G-set and illustrative graph instances used in the study.}
\label{tabS:instances}
\centering
\begin{tabular}{lcccc}
\toprule
Instance & Nodes & Edges & Weight value(s) & Weight type \\
\midrule
G1  & 800  & 19176 & $+1$ & random \\
G6  & 800  & 19176 & $\pm 1$ & random \\
G11 & 800  & 1600  & $\pm 1$ & toroidal \\
G35 & 2000 & 11778 & $+1$ & planar \\
G39 & 2000 & 11778 & $\pm 1$ & planar \\
G58 & 5000 & 29570 & $+1$ & planar \\
G63 & 7000 & 41459 & $+1$ & planar \\
G64 & 7000 & 41459 & $\pm 1$ & planar \\
\midrule
Toy-1 & 4  & 4  & $+1$ & AF ring \\
Toy-2 & 4  & 6  & $-1$ & FM complete \\
Toy-3 & 6  & 7  & $+1$ & AF triangles + bridge \\
Toy-4 & 8  & 8  & $+1$ & AF ring (C8) \\
Toy-5 & 16 & 24 & $-1$ & FM $4\times 4$ lattice \\
Toy-6a & 32 & 93 & $\pm 1$ & ER random ($k\approx 6$) \\
Toy-6b & 32 & 107 & $\pm 1$ & ER random ($k\approx 6$) \\
Toy-7 & 64 & 200 & $\pm 1$ & ER random ($k\approx 6$) \\
\bottomrule
\end{tabular}
\end{table}

The small illustrative graphs are intended mainly to visualize the oscillation mechanism and its suppression under increasing $c$. The larger G-set and Erd\H{o}s--R\'enyi graphs provide the more relevant regime for quantitative comparison with the mean-field and Gaussian theories.

\bibliographystyle{apsrev4-2}
\bibliography{main}
		
\end{document}